\documentclass[english,11pt]{smfart}

\setcounter{tocdepth}{2}

\usepackage{etex}

\usepackage{amsbsy}
\usepackage{amsmath,amsfonts,amssymb,amsthm,mathrsfs,mathtools}

\usepackage{bm}

\usepackage[a4paper,vmargin={3cm,3cm},hmargin={3cm,3cm}]{geometry}
\linespread{1.2}

\usepackage[font=sf, labelfont={sf,bf}, margin=1cm]{caption}
\usepackage{graphicx}
\usepackage{epsfig}%pour les eps
\usepackage{latexsym}%encore des symboles
\usepackage{xcolor}
\usepackage{ae,aecompl}
\usepackage{soul,framed}
\usepackage{comment}

\usepackage{xcolor}
\usepackage[pdfpagemode=UseNone,bookmarksopen=false,colorlinks=true,urlcolor=blue,citecolor=blue,citebordercolor=blue,linkcolor=blue]{hyperref}
\usepackage{smfhyperref}
\usepackage[capitalize]{cleveref}

\usepackage{pstricks}
\usepackage{enumerate}
\usepackage{tikz,animate,media9}						%%%%%%%%%%
\usepackage{todonotes}
\usepackage{pifont}
\usepackage{bm,marvosym}
\usepackage{algorithm}
\usepackage{algorithmic}

% for the \mathbbm{1} indicator
\usepackage{bbm}

% boxes around text
\usepackage{tcolorbox}

%% style formatting

%\usepackage{natbib}  %Nature-like bibliography

%% Definition of colors for links
\definecolor{aleacolor}{rgb}{0.16,0.59,0.78}

%% Settings for natbib package
%\renewcommand{\cite}{\citet}
%\renewcommand{\cite}{\citet*}

%% Settings for hyperref package
\hypersetup{
	breaklinks,
	colorlinks=true,
	linkcolor=aleacolor,
	urlcolor=aleacolor,
	citecolor=aleacolor}

%% macros

% column vectors

\newcount\colveccount
\newcommand*\colvec[1]{
	\global\colveccount#1
	\begin{pmatrix}
		\colvecnext
	}
	\def\colvecnext#1{
		#1
		\global\advance\colveccount-1
		\ifnum\colveccount>0
		\\
		\expandafter\colvecnext
		\else
	\end{pmatrix}
	\fi
}

% bold letters for sets of numbers

\newcommand{\ndR}{\mathbb{R}}

% probability of some event
\renewcommand{\Pr}[1]{\mathbb{P}(#1)}

% probability of some event (big brackets)

% expectation

% expectation (big brackets)

% variance

% variance (big brackets)

% indicator symbol

% weak convergence
\newcommand{\convdis}{\,{\buildrel d \over \longrightarrow}\,}

% convergence in probability

% almost sure convergence
\newcommand{\convas}{\,{\buildrel a.s. \over \longrightarrow}\,}

% equality in distribution

% approximation in total variation

% Height

% Diameter
\newcommand{\Di}{\mathrm{D}}

% weight sequence

%% Gromov-Hausdorff-Prokhorov

%% cal letters for classes of structures

%% special classes of structures

% sf letters random elements of a class

\newcommand{\mX}{\mathsf{X}}
\newcommand{\mY}{\mathsf{Y}}

% mathfrak letters

% theorem styles

\newtheorem{theorem}{Theorem}[section]

\newtheorem{corollary}[theorem]{Corollary}
\newtheorem{proposition}[theorem]{Proposition}
\newtheorem{lemma}[theorem]{Lemma}

%% numbering of equations
\numberwithin{equation}{section}

%\keywords{Random planar maps, Brownian map, scaling limits}
%\subjclass[2010]{05C80, 60F17}

\title[Mass and radius of balls in GHP convergent sequences]{\textbf{Mass and radius of balls in Gromov--Hausdorff--Prokhorov convergent sequences}}
\date{}

\author{Benedikt Stufler}
%\thanks{University of Zurich, Institute of Mathematics. \\ E-mail: benedikt.stufler@math.uzh.ch}

% \thanks{The author is supported by the Swiss National Science Foundation grant number 200020\_172515.}}
% The author is supported by the Swiss National Science Foundation grant number 200020\_172515.}}
\address[Benedikt Stufler]{Vienna University of Technology}
\email{benedikt.stufler [at] tuwien.ac.at}

\begin{document}

\begin{abstract}
We survey some properties of Gromov--Hausdorff--Prokhorov convergent sequences $(\mathsf{X}_n, d_{\mathsf{X}_n}, \nu_{\mathsf{X}_n})_{n \ge 1}$ of random compact metric spaces equipped with Borel probability measures. We formalize that if the limit is almost surely non-atomic, then for large $n$ each open ball in $\mathsf{X}_n$ with small radius must have small mass. Conversely, if the limit is almost surely fully supported, then each closed ball in $\mathsf{X}_n$ with small mass must have small radius.

We do not claim any new results, but justifications are provided for properties for which we could not find explicit references.
\end{abstract}

%\tableofcontents	

\maketitle

\section{Introduction}

The Gromov--Hausdorff--Prokhorov  distance $d_{\mathrm{GHP}}$ is a metric on the collection of  $\mathfrak{K}$ of (representatives of equivalence classes of) compact metric spaces endowed with Borel probability measures. Given $(X, d_X, \mu_X), (Y, d_Y, \mu_Y) \in \mathfrak{K}$, it may be defined by
\[
d_{\textsc{GHP}}( (X, d_X, \mu_X), (Y, d_Y, \mu_Y)) = \inf_{E, \varphi_X, \varphi_Y} \max(d_{\textsc{H}}(\varphi_X(X), \varphi_Y(Y)), d_{\textsc{P}}(\mu_X\varphi_X^{-1}, \mu_Y\varphi_Y^{-1} ) )
\]
with the index ranging over all isometric embeddings $\varphi_X: X \to E$ and $\varphi_Y: Y \to E$ into any common metric space $E$. Here $d_{\textsc{H}}(\varphi_X(X), \varphi_Y(Y))$ denotes the Hausdorff distance of the images of $X$ and $Y$, and $d_{\textsc{P}}(\mu_X\varphi_X^{-1}, \mu_Y\varphi_Y^{-1} )$ denotes the Prokhorov distances of the push-forwards of the measures $\mu_X$ and $\mu_Y$ along $\varphi_X$ and $\varphi_Y$. Detailed expositions of this concept can be found in \cite[Ch. 7]{MR1835418},~\cite[Ch. 27]{zbMATH05306371},~\cite{zbMATH06247183} \cite[Sec. 6]{MR2571957}, and the recent survey~\cite{janson2020gromovprohorov}.

In the following sections we survey selected properties on the mass of $\epsilon$-balls in convergent sequences of random elements $(\mathsf{X}_n, d_{\mathsf{X}_n}, \nu_{\mathsf{X}_n})_{n \ge 1}$ of $\mathfrak{K}$. Specifically, we  formalize that if the limit is almost surely non-atomic, then for large $n$ each open $\epsilon$-ball in $\mathsf{X}_n$ with small $\epsilon$ must have small mass. Conversely, if the limit is almost surely fully supported, then each closed $\epsilon$-ball in $\mathsf{X}_n$ with small mass must have small radius $\epsilon$.

We do not claim any new results, but justifications are provided for statements for which we could not find explicit references.

%\subsection*{Notation}

%We let $d_{\mathrm{H}}$ denote the Hausdorff distance between compact subsets of a metric space, $d_{\mathrm{P}}$ the Prokhorov distance between Borel measures, $d_{\mathrm{GH}}$ the Gromov--Hausdorff distance between compact metric spaces, $d_{\mathrm{GP}}$ the Gromov--Prokhorov distance between measured Polish metric spaces, and $d_{\mathrm{GHP}}$  the Gromov--Hausdorff--Prokhorov distance. 

\section{The largest mass of open $\epsilon$-balls}

For any $\epsilon>0$ and any element $(X, d_X, \nu_X) \in \mathfrak{K}$ we define
\begin{align}
	\label{eq:deflambda}
	\lambda_\epsilon^X := \sup_{x \in X} \nu_X(B_\epsilon^X(x)).
\end{align}
Here 
\begin{align}
	B_\epsilon^X(x) = \{y \in X \mid d_X(x,y) < \epsilon\}
\end{align} denotes the open $\epsilon$-ball in $X$.

\begin{lemma}
	\label{le:lambda}
	Let $(X, d_X, \nu_X), (Y, d_Y, \nu_Y) \in \mathfrak{K}$ satisfy 
	\[
	d_{\mathrm{GHP}}\left( (X, d_X, \nu_X), (Y, d_Y, \nu_Y) \right) < \epsilon'
	\]
	for some $\epsilon'>0$. Then  for all $\epsilon>0$
	\[
	\lambda_\epsilon^X \le \lambda_{\epsilon + 2 \epsilon'}^Y + \epsilon'.
	\]
\end{lemma}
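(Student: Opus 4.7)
\bigskip

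\noindent\textbf{Proof plan.} The plan is to unpack the Gromov--Hausdorff--Prokhorov hypothesis into a single ambient space and then chase balls through the isometric embeddings, using the Hausdorff bound to transport centers between $X$ and $Y$ and the Prokhorov bound to transport mass.

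Since the infimum in the definition of $d_{\mathrm{GHP}}$ is strictly less than $\epsilon'$, I would first fix a common metric space $(E,d_E)$ and isometric embeddings $\varphi_X\colon X\to E$, $\varphi_Y\colon Y\to E$ such that both $d_{\mathrm{H}}(\varphi_X(X),\varphi_Y(Y))<\epsilon'$ and $d_{\mathrm{P}}(\nu_X\varphi_X^{-1},\nu_Y\varphi_Y^{-1})<\epsilon'$. I will identify $X$ and $Y$ with their images in $E$ and drop the $\varphi$'s in the notation.

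Next, I would fix an arbitrary $x\in X$ and estimate $\nu_X(B_\epsilon^X(x))$. Because $\varphi_X$ is an isometry, $B_\epsilon^X(x)$ is contained in the intersection of $X$ with the open $\epsilon$-ball $B_\epsilon^E(x)$ in $E$. By the Hausdorff bound there exists $y\in Y$ with $d_E(x,y)<\epsilon'$, and by the triangle inequality
\[
B_\epsilon^E(x)\cap Y\ \subseteq\ B_{\epsilon+2\epsilon'}^Y(y),
\]
since any $z\in Y$ with $d_E(x,z)<\epsilon$ satisfies $d_Y(y,z)=d_E(y,z)<\epsilon+\epsilon'$, which is certainly less than $\epsilon+2\epsilon'$. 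The factor $2\epsilon'$ appears because I also need to absorb the Prokhorov $\epsilon'$-fattening below.

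Indeed, applying the Prokhorov bound to the Borel set $A=B_\epsilon^E(x)\cap X$ gives
\[
\nu_X(B_\epsilon^X(x))\ \le\ \nu_X(A)\ \le\ \nu_Y(A^{\epsilon'})+\epsilon',
\]
and $A^{\epsilon'}\cap Y\subseteq B_{\epsilon+\epsilon'}^E(x)\cap Y\subseteq B_{\epsilon+2\epsilon'}^Y(y)$ by the same triangle-inequality argument with $\epsilon$ replaced by $\epsilon+\epsilon'$. Hence
\[
\nu_X(B_\epsilon^X(x))\ \le\ \nu_Y(B_{\epsilon+2\epsilon'}^Y(y))+\epsilon'\ \le\ \lambda_{\epsilon+2\epsilon'}^Y+\epsilon'.
\]
Taking the supremum over $x\in X$ yields the claim.

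The argument is essentially mechanical; the only mild subtlety is bookkeeping the $\epsilon'$-slack twice, once from the Hausdorff approximation of the center and once from the Prokhorov $\epsilon'$-fattening of the set, which together force the radius $\epsilon+2\epsilon'$ on the $Y$-side. I do not anticipate any genuine obstacle.
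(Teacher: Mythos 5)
Your proof is correct and follows essentially the same route as the paper's: fix a common ambient space realizing the GHP bound, use the Hausdorff estimate to find a nearby center $y\in Y$, and use the Prokhorov estimate to transfer mass, accounting for the two $\epsilon'$-losses in the radius. The only cosmetic difference is the order of operations (you apply the Prokhorov fattening to the $x$-centered set and then recenter at $y$, while the paper recenters first and then applies Prokhorov), which yields the identical bound.
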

\begin{proof}
	It follows from the assumed bound on the Gromov--Hausdorff--Prokhorov distance that there is a metric space $(Z, d_Z)$ and an isometric embeddings $\varphi:~X \to Z$ and $\psi: Y \to Z$ such that the Hausdorff distance between the images $\varphi(X)$ and $\psi(Y)$  is smaller than $\epsilon'$, and the Prokhorov distance between the push-forward measures $\nu_X^\varphi$ and $\nu_Y^\psi$ is also smaller than $\epsilon'$. Let $x$ be an arbitrary point of $X$. Using $d_{\mathrm{H}}(\varphi(X), \psi(Y)) < \epsilon'$, it follows that there is a point $y$ of $Y$ with
	\[
	d_{Z}(\varphi(x), \psi(y)) < \epsilon'.
	\]
	Clearly this implies for any $\epsilon>0$
	\[
	B_\epsilon^Z(\varphi(x)) \subset B_{\epsilon + \epsilon'}^Z(\psi(y)).
	\]
	Hence
	\[
	\nu_X^\varphi(B_\epsilon^Z(\varphi(x))) \le \nu_X^\varphi(B_{\epsilon + \epsilon'}^Z(\psi(y))).
	\]
	Using $d_{\mathrm{P}}(\nu_X^\varphi, \nu_Y^\psi) < \epsilon'$, it follows that
	\[
	\nu_X^\varphi(B_{\epsilon + \epsilon'}^Z(\psi(y))) \le \nu_Y^\psi(B_{\epsilon + 2\epsilon'}^Z(\psi(y))) + \epsilon'.
	\]
	Combining the last two inequalities and using that $\varphi, \psi$ are isometries, it follows that
	\[
	\nu_X(B_{\epsilon}^X(x)) \le \nu_Y(B_{\epsilon + 2\epsilon'}^Y(y)) + \epsilon'.
	\] 
	As this holds for an arbitrary $x \in X$, it follows that
	\[
	\lambda_\epsilon^X \le \lambda_{\epsilon + 2\epsilon'}^Y  + \epsilon'.
	\]
\end{proof}

\begin{corollary}
	\label{co:uppercontlambda}
	\begin{enumerate}
		\item Let $(X, d_X, \nu_X) \in \mathfrak{K}$. The function \[ [0,\infty[ \to \ndR, \epsilon \mapsto \lambda_\epsilon^X\] increases monotonically and is left-continuous.
		\item 	Fix $\epsilon>0$. The function \[\mathfrak{K} \to \ndR, (X, d_X, \nu_X) \mapsto \lambda_\epsilon^X\] is lower semi-continuous.
	\end{enumerate}
\end{corollary}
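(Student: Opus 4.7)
Monotonicity is immediate: if $\epsilon_1 \le \epsilon_2$, then $B_{\epsilon_1}^X(x) \subset B_{\epsilon_2}^X(x)$ for every $x \in X$, hence $\nu_X(B_{\epsilon_1}^X(x)) \le \nu_X(B_{\epsilon_2}^X(x))$, and taking the supremum over $x$ yields $\lambda_{\epsilon_1}^X \le \lambda_{\epsilon_2}^X$. For left-continuity, fix $\epsilon>0$ and a sequence $\epsilon_n \uparrow \epsilon$. Since the balls are open, for every $x \in X$ we have the nested union $B_\epsilon^X(x) = \bigcup_n B_{\epsilon_n}^X(x)$, so continuity of $\nu_X$ from below gives $\nu_X(B_{\epsilon_n}^X(x)) \uparrow \nu_X(B_\epsilon^X(x))$. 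Thus $\nu_X(B_\epsilon^X(x)) \le \sup_n \lambda_{\epsilon_n}^X$; taking the supremum over $x$ yields $\lambda_\epsilon^X \le \sup_n \lambda_{\epsilon_n}^X$, and the reverse inequality is part of the monotonicity just established.

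\textbf{Plan for part (2).} I would derive lower semi-continuity from Lemma~\ref{le:lambda} combined with the left-continuity from part (1). Suppose $(X_n, d_{X_n}, \nu_{X_n}) \to (X, d_X, \nu_X)$ in $d_{\mathrm{GHP}}$, and choose a sequence $\epsilon'_n \downarrow 0$ with $d_{\mathrm{GHP}}((X, d_X, \nu_X), (X_n, d_{X_n}, \nu_{X_n})) < \epsilon'_n$. For any fixed $\epsilon'' \in (0,\epsilon)$, once $n$ is large enough that $\epsilon'' + 2\epsilon'_n \le \epsilon$, Lemma~\ref{le:lambda} together with monotonicity (part (1)) gives
\[
\lambda_{\epsilon''}^X \le \lambda_{\epsilon'' + 2\epsilon'_n}^{X_n} + \epsilon'_n \le \lambda_\epsilon^{X_n} + \epsilon'_n.
\]
Taking $\liminf_{n \to \infty}$ (and using $\epsilon'_n \to 0$) yields $\lambda_{\epsilon''}^X \le \liminf_n \lambda_\epsilon^{X_n}$ for every $\epsilon'' < \epsilon$. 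Finally, letting $\epsilon'' \uparrow \epsilon$ and invoking the left-continuity proved in part (1) gives $\lambda_\epsilon^X \le \liminf_n \lambda_\epsilon^{X_n}$, which is exactly lower semi-continuity.

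\textbf{Main obstacle.} The one point that is not merely mechanical is recognizing that Lemma~\ref{le:lambda} only controls $\lambda_\epsilon^X$ by $\lambda_{\epsilon + 2\epsilon'}^Y$ at a \emph{larger} radius, so a naive application cannot recover lower semi-continuity at the original radius $\epsilon$. The fix is to first pass to a strictly smaller radius $\epsilon'' < \epsilon$ (for which the enlarged radius $\epsilon'' + 2\epsilon'_n$ still fits below $\epsilon$ for large $n$), and then to close the gap by letting $\epsilon'' \uparrow \epsilon$; this is precisely where the left-continuity from part (1) is indispensable, and it also explains why \emph{open} balls (rather than closed ones) are the correct choice in the definition of $\lambda_\epsilon^X$.
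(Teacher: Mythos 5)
Your proposal is correct and follows essentially the same route as the paper: part (1) rests on the fact that the open $\epsilon$-ball is the increasing union of the smaller open balls together with continuity of $\nu_X$ from below (the paper phrases this as a contradiction, you argue it directly), and part (2) applies Lemma~\ref{le:lambda} at a slightly shrunken radius and then closes the gap via the left-continuity from part (1), exactly as the paper does. Your closing remark correctly identifies the one non-mechanical point, namely that the radius inflation in Lemma~\ref{le:lambda} forces the detour through $\epsilon'' < \epsilon$ and makes left-continuity (hence the choice of open balls) essential.
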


\begin{proof}
	We start with the first claim.  
	It is clear that the function $\lambda_{(\cdot)}^X$  increases monotonically. Suppose that there is a point $\epsilon\ge 0$ where it is not left-continuous. As it is monotonically increasing it has one-sided limits at each point,  entailing that $\lim_{t \uparrow \epsilon} \lambda_t^X - \lambda_\epsilon^X >0$. Hence there is a point $x \in X$ and a $\delta>0$ such that \[
	\nu_X(B_{\epsilon - 1/n} (x)) + \delta \le  \nu_X(B_\epsilon(x))\] for all $n \ge 1$. Thus
	\[
	\nu_X(B_{\epsilon - 1/n} (x) \setminus B_\epsilon(x)) \ge \delta
	\]
	for all $n$. But the left-hand side tends to zero as $n$ becomes large. Hence we have proven by contradiction, that $\lambda_{(\cdot)}^X$ is left-continuous on $[0,\infty[$. This completes the proof of the first claim.
	
	As for the second claim, suppose that  $(X,d_X, \nu_X) \in \mathfrak{K}$ is the Gromov--Hausdorff--Prokhorov limit of a sequence $(X_n, d_{X_n}, \nu_{X_n})_{n \ge 1}$ in $\mathfrak{K}$.  Lemma~\ref{le:lambda} implies that for some sequence $t_n>0$ with $t_n=o(1)$
	\[
	\lambda_{\epsilon - 2 t_n}^{X} + t_n \le \lambda_{\epsilon}^{X_n}.
	\]
	As $\lambda_{(\cdot)}^{X}$ is left-continuous, it follows that 
	\[
	\lambda_{\epsilon}^{X_n} \ge \lambda_{\epsilon}^X + o(1).
	\]
	Hence $\lambda_{\epsilon}^{(\cdot)}$ is lower semi-continuous.
\end{proof}

\begin{corollary}
	\label{co:fulllambda}
	Suppose that $(X,d_X, \nu_X)$ is the Gromov--Hausdorff--Prokhorov limit of a sequence $(X_n, d_{X_n}, \nu_{X_n})_{n \ge 1}$.  The following statements are equivalent:
	\begin{enumerate}
		\item There is no point $x \in X$ with $\nu_X(\{x\}) >0$.
		\item $\lim_{\epsilon \downarrow 0} \lambda_\epsilon^X=0$.
		\item $\lim_{\epsilon \downarrow 0} \limsup_{n \to \infty} \lambda_\epsilon^{X_n}=0$
	\end{enumerate}
\end{corollary}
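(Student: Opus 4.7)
The plan is to split the three-way equivalence into two parts: the intrinsic equivalence (1) $\Leftrightarrow$ (2), which depends only on $(X, d_X, \nu_X)$ and crucially uses compactness of $X$, and the extrinsic equivalence (2) $\Leftrightarrow$ (3), which will follow from Lemma~\ref{le:lambda} combined with the lower semi-continuity of $\lambda_\epsilon^{(\cdot)}$ supplied by Corollary~\ref{co:uppercontlambda}(2).

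The direction (2) $\Rightarrow$ (1) is immediate from the chain $\nu_X(\{x\}) \le \nu_X(B_\epsilon^X(x)) \le \lambda_\epsilon^X$, valid for every $x \in X$ and every $\epsilon > 0$. For (1) $\Rightarrow$ (2), I would argue by contradiction: suppose $\lim_{\epsilon \downarrow 0} \lambda_\epsilon^X = c > 0$. Then one may pick points $x_n \in X$ with $\nu_X(B_{1/n}^X(x_n)) > c/2$. By compactness of $X$, extract a subsequence $x_n \to x^\ast$. Because $d_X(x_n, x^\ast) + 1/n < \eta$ for $n$ large, the inclusion $B_{1/n}^X(x_n) \subseteq B_\eta^X(x^\ast)$ holds eventually, so $\nu_X(B_\eta^X(x^\ast)) \ge c/2$ for every $\eta > 0$. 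Continuity of $\nu_X$ from above along $\eta = 1/k$ then yields $\nu_X(\{x^\ast\}) \ge c/2 > 0$, contradicting (1). The main obstacle here is the proper use of compactness: without it the supremum defining $\lambda_\epsilon^X$ could be approached without any limiting cluster point, and no atom would materialize.

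For (2) $\Rightarrow$ (3), I would select $\epsilon_n' \downarrow 0$ with $d_{\mathrm{GHP}}(\mathsf{X}_n, X) < \epsilon_n'$ and apply Lemma~\ref{le:lambda} to obtain $\lambda_\epsilon^{\mathsf{X}_n} \le \lambda_{\epsilon + 2\epsilon_n'}^X + \epsilon_n'$. For any fixed $\epsilon > 0$, once $2\epsilon_n' < \epsilon$, monotonicity gives $\lambda_{\epsilon + 2\epsilon_n'}^X \le \lambda_{2\epsilon}^X$, so $\limsup_{n \to \infty} \lambda_\epsilon^{\mathsf{X}_n} \le \lambda_{2\epsilon}^X$. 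Letting $\epsilon \downarrow 0$ and invoking (2) delivers (3). Conversely, for (3) $\Rightarrow$ (2), Corollary~\ref{co:uppercontlambda}(2) applied pointwise in $\epsilon$ gives $\lambda_\epsilon^X \le \liminf_{n \to \infty} \lambda_\epsilon^{\mathsf{X}_n} \le \limsup_{n \to \infty} \lambda_\epsilon^{\mathsf{X}_n}$, and letting $\epsilon \downarrow 0$ with (3) in hand yields (2). No further ingredients are needed beyond the two preceding results and the compactness argument above.
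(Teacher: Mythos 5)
Your proof is correct and follows essentially the same route as the paper: compactness plus a cluster-point argument for (1)$\Leftrightarrow$(2), and Lemma~\ref{le:lambda} for (2)$\Leftrightarrow$(3). The only cosmetic difference is that for (3)$\Rightarrow$(2) you invoke the lower semi-continuity from Corollary~\ref{co:uppercontlambda}(2) rather than reapplying Lemma~\ref{le:lambda} with the roles of the spaces swapped, which amounts to the same thing since that corollary is itself derived from the lemma.
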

\begin{proof}
	The equivalence of the first two statements follows from the compactness of $X$. Indeed, the existence of a  singleton with positive mass $c>0$ would imply $\lim_{\epsilon \downarrow 0} \lambda_\epsilon^X\ge c $. Conversely, $\lim_{\epsilon \downarrow 0} \lambda_\epsilon^X>0$ implies the existence of a lower bound $c>0$ and for each $n \ge 1$ a point $x_n \in X$ and a radius $\epsilon_n>0$ such that $\nu_X(B_{\epsilon_n}(x_n)) > c$ for all $n \ge 1$ and $\lim_{n \to \infty} \epsilon_n= 0$. Since $X$ is compact, it follows that $x_n$ admits a limit point $x \in X$ as $n$ tends to infinity along some subsequence. This entails $\nu_X(B_\epsilon(x))\ge c$ for any $\epsilon>0$, and hence $\nu_X(\{x\}) \ge c>0$.

	The equivalence of the second and third statement follow from the inequality
	\[
	\lambda_\epsilon^{X_n} \le \lambda_{2 \epsilon}^X + o(1) \le \lambda_{3 \epsilon}^{X_n} + o(1)
	\]
	which holds for each $\epsilon>0$ by Lemma~\ref{le:lambda}.
\end{proof}

The following version for random elements of~$\mathfrak{K}$ will be used later on.

\begin{corollary}
	\label{co:conconlambda}
	Let $(\mX,d_{\mX}, \nu_{\mX})$ and $(\mX_n, d_{\mX_n}, \nu_{\mX_n})_{n \ge 1}$ be random elements of $\mathfrak{K}$ satisfying
	\[
	(\mX_n, d_{\mX_n}, \nu_{\mX_n}) \convdis (\mX,d_{\mX}, \nu_{\mX}).
	\]
	Then the following statements are equivalent:
	\begin{enumerate}
		\item Almost surely there is no $x \in \mX$ with $\nu_\mX(\{x\})>0$.
		\item For all $\epsilon, \epsilon'>0$ there exist $\delta>0$ and $N>0$ such that for all $n \ge N$: $\Pr{\lambda_\delta^{\mX_n} > \epsilon} < \epsilon'$.
	\end{enumerate}
\end{corollary}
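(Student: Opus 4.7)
The plan is to reduce both implications to the deterministic Corollary~\ref{co:fulllambda} via a Skorokhod coupling, and then transfer between pointwise and probabilistic statements using Fatou's lemma. Since $(\mathfrak{K}, d_{\mathrm{GHP}})$ is Polish, Skorokhod's representation theorem lets us assume, on a common probability space, that $(\mX_n, d_{\mX_n}, \nu_{\mX_n}) \to (\mX, d_\mX, \nu_\mX)$ almost surely in GHP; this coupling suffices for either direction since the statements in (1) and (2) only involve marginal laws.

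For $(1) \Rightarrow (2)$, I would introduce $L_\delta := \limsup_{n \to \infty} \lambda_\delta^{\mX_n}$. This is a $[0,1]$-valued random variable, measurable because each $\lambda_\delta^{\mX_n}$ is so (by the lower semi-continuity from Corollary~\ref{co:uppercontlambda}), and it is monotone non-decreasing in $\delta$. Under the Skorokhod coupling, on the almost sure event where $\mX_n \to \mX$ in GHP and $\mX$ has no atoms, Corollary~\ref{co:fulllambda} applied pointwise gives $\lim_{\delta \downarrow 0} L_\delta = 0$; bounded convergence then yields $\Pr{L_\delta > \epsilon} \to 0$ as $\delta \downarrow 0$. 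Combining this with the reverse-Fatou inequality
\[
\limsup_{n \to \infty} \Pr{\lambda_\delta^{\mX_n} > \epsilon} \le \Pr{\limsup_n \{\lambda_\delta^{\mX_n} > \epsilon\}} \le \Pr{L_\delta \ge \epsilon}
\]
and choosing $\delta$ small enough delivers the bound in (2).

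For $(2) \Rightarrow (1)$, I would exploit the lower semi-continuity of $\lambda_\delta^{(\cdot)}$ from Corollary~\ref{co:uppercontlambda} on the Skorokhod coupling to obtain $\lambda_\delta^{\mX} \le \liminf_n \lambda_\delta^{\mX_n}$ almost surely. Applying direct Fatou to the indicators of $\{\lambda_\delta^{\mX_n} > \epsilon\}$ then yields $\Pr{\lambda_\delta^{\mX} > \epsilon} \le \liminf_n \Pr{\lambda_\delta^{\mX_n} > \epsilon}$, and the right-hand side can be made arbitrarily small by (2) through a suitable choice of $\delta$. The monotonicity of $\delta \mapsto \lambda_\delta^{\mX}$ upgrades this to $\lim_{\delta \downarrow 0} \lambda_\delta^{\mX} = 0$ almost surely, after sending first $\epsilon'$ and then $\epsilon$ to $0$ along countable sequences, and Corollary~\ref{co:fulllambda} rules out atoms.

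The main obstacle I anticipate is minor bookkeeping: keeping track of which variant of Fatou (direct versus reverse) applies on each side, and verifying that the suprema and $\limsup$'s defining $\lambda_\delta^{\mX}$ and $L_\delta$ are Borel functionals on $\mathfrak{K}$ and its countable product, which is automatic from the lower semi-continuity already established. The substantive content is really Corollary~\ref{co:fulllambda}; the present statement is essentially a Skorokhod-plus-Fatou packaging of it.
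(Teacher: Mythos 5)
Your proposal is correct and follows essentially the same route as the paper: both directions reduce to Corollary~\ref{co:fulllambda} via Skorokhod's representation theorem, with the transfer between pointwise and probabilistic statements handled by (reverse) Fatou in the $(1)\Rightarrow(2)$ direction and by the lower semi-continuity of $\lambda_\delta^{(\cdot)}$ in the other. The only cosmetic difference is that for $(2)\Rightarrow(1)$ the paper cites the Portmanteau theorem for the open set $\{\lambda_\delta^Z>\epsilon\}$ and argues by contraposition, whereas you re-derive the same inequality by hand from the coupling and conclude directly.
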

\begin{proof}
	By Corollary~\ref{co:uppercontlambda} it holds for all $\epsilon>0$ that  the map $\lambda_{\epsilon}^{(\cdot)}$  is lower semi-continuous with respect to the Gromov--Hausdorff--Prokhorov metric. Thus, for any $\epsilon, \delta>0$, the subset $ \{ (Z, d_Z, \nu_Z) \in \mathfrak{K} \mid \lambda_\delta^Z > \epsilon\}$ is open. By the Portmanteau theorem
	\begin{align}
		\label{eq:dohlambda}
		\liminf_{n \to \infty} \Pr{ \lambda_\delta^{\mX_n} > \epsilon} \ge \Pr{ \lambda_\delta^{\mX} > \epsilon} .
	\end{align}
	
	Now suppose that with a positive probability there is a point $x \in \mX$ with $\nu_{\mX}(\{x\}) >0$. Then there exist $\epsilon, \epsilon'>0$ with $\Pr{\lambda_\delta^{\mX} > \epsilon} \ge \epsilon'$ for all $\delta>0$. It follows by \eqref{eq:dohlambda}  that $\Pr{ \lambda_\delta^{\mX_n} > \epsilon} \ge \epsilon'$ for all sufficiently large $n$. As this holds for each $\delta>0$, we have shown by contraposition that Claim 2 implies Claim 1. 
	
	It remains to show that Claim 1 implies Claim 2. That is, we assume now that almost surely  all $x \in \mX$ satisfy $\nu_{\mX}(\{x\}) = 0$. By Skorokhod's representation theorem we may assume that $\mX_n$ converges almost surely towards $\mX$. It follows from Corollary~\ref{co:fulllambda}  that  \[
	\lim_{\delta \downarrow 0} \limsup_{n \to \infty} \lambda_\delta^{\mX_n}=0
	\] holds almost surely.  Consequently, for any $\epsilon', \epsilon>0$ we may select $\delta>0$ and  such that \[\Pr{\limsup_{n \to \infty} \lambda_\delta^{\mX_n} > \epsilon/2} < \epsilon'.\]
	Hence there exists $N \ge 1$ such that for all $n \ge N$
	\[
	\Pr{\lambda_{\delta}^{\mX_n} > \epsilon/2 } < \epsilon'.
	\]
\end{proof}

\section{The minimal mass of closed $\epsilon$-balls}
For any $\epsilon>0$ and any element $(X, d_X, \nu_X) \in \mathfrak{K}$ we define
\begin{align}
	\label{eq:def}
	\rho_\epsilon^X := \inf_{x \in X} \nu_X(C_\epsilon^X(x)).
\end{align}
Here 
\begin{align}
	C_\epsilon^X(x) = \{y \in X \mid d_X(x,y) \le \epsilon\}
\end{align} denotes the closed $\epsilon$-ball in $X$. 

\begin{comment}
For any subset $A \subset X$ and any point $x \in X$ we also use the common notation
\begin{align}
d(x,A) := \inf_{a \in A} d_X(x,a)
\end{align}
and
\begin{align}
A^\epsilon := \{x \in X \mid d_X(A,x) < \epsilon\}.
\end{align}
\end{comment}

\begin{lemma}
	\label{le:rho}
	Let $(X, d_X, \nu_X), (Y, d_Y, \nu_Y) \in \mathfrak{K}$ satisfy 
	\[
	d_{\mathrm{GHP}}\left( (X, d_X, \nu_X), (Y, d_Y, \nu_Y) \right) < \epsilon'
	\]
	for some $\epsilon'>0$. Then it holds for all $\epsilon>0$ that
	\[
	\rho_\epsilon^X \le \rho_{\epsilon + 2 \epsilon'}^Y + \epsilon'.
	\]
\end{lemma}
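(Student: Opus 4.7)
The plan is to mirror the proof of Lemma~\ref{le:lambda}, with the only structural change being that, because $\rho_\epsilon^X$ is an infimum over $x \in X$ rather than a supremum, the quantifiers have to be swapped: I would start from an arbitrary $y \in Y$ and use the Hausdorff bound to produce a witness $x \in X$ for which the closed $\epsilon$-ball in $X$ has mass bounded by the closed $(\epsilon + 2 \epsilon')$-ball around $y$ in $Y$, up to an additive $\epsilon'$.

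Concretely, the hypothesis on $d_{\mathrm{GHP}}$ gives a metric space $(Z, d_Z)$ together with isometric embeddings $\varphi: X \to Z$ and $\psi: Y \to Z$ such that $d_{\mathrm{H}}(\varphi(X), \psi(Y)) < \epsilon'$ and $d_{\mathrm{P}}(\nu_X^\varphi, \nu_Y^\psi) < \epsilon'$. For arbitrary $y \in Y$, the Hausdorff bound yields some $x \in X$ with $d_Z(\varphi(x), \psi(y)) < \epsilon'$, and the triangle inequality then gives the inclusion
\[
C_\epsilon^Z(\varphi(x)) \subset C_{\epsilon + \epsilon'}^Z(\psi(y)),
\]
so that $\nu_X^\varphi(C_\epsilon^Z(\varphi(x))) \le \nu_X^\varphi(C_{\epsilon + \epsilon'}^Z(\psi(y)))$.

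Next, I would apply the Prokhorov bound to the Borel set $C_{\epsilon + \epsilon'}^Z(\psi(y))$. Its open $\epsilon'$-neighborhood in $Z$ is contained in $C_{\epsilon + 2 \epsilon'}^Z(\psi(y))$ by another application of the triangle inequality, so
\[
\nu_X^\varphi(C_{\epsilon + \epsilon'}^Z(\psi(y))) \le \nu_Y^\psi(C_{\epsilon + 2 \epsilon'}^Z(\psi(y))) + \epsilon'.
\]
Chaining these two estimates and transporting the balls back to $X$ and $Y$ via the isometries yields $\nu_X(C_\epsilon^X(x)) \le \nu_Y(C_{\epsilon + 2 \epsilon'}^Y(y)) + \epsilon'$. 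Since $\rho_\epsilon^X \le \nu_X(C_\epsilon^X(x))$ and $y$ was arbitrary, taking infimum over $y \in Y$ on the right-hand side delivers the claim.

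I do not expect any real obstacle: the argument is parallel to that of Lemma~\ref{le:lambda}, and the only points that need a moment of care are the quantifier swap (pick $y$ first, then find a matching $x$) and the verification of the elementary inclusion between the open $\epsilon'$-neighborhood of a closed $r$-ball and the closed $(r + \epsilon')$-ball around the same center in $Z$.
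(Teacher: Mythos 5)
Your proposal is correct and follows essentially the same route as the paper's own proof: fix an arbitrary $y \in Y$, use the Hausdorff bound to find a nearby $x \in X$, chain the two ball inclusions, apply the Prokhorov bound, and take the infimum over $y$. The paper's argument matches yours step for step, including the quantifier swap relative to Lemma~\ref{le:lambda}.
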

\begin{proof}
	By assumption, there is a metric space $(Z, d_Z)$ and an isometric embeddings $\varphi:~X \to Z$ and $\psi: Y \to Z$ such that the Hausdorff distance between the images $\varphi(X)$ and $\psi(Y)$  is smaller than $\epsilon'$, and the Prokhorov distance between the push-forward measures $\nu_X^\varphi$ and $\nu_Y^\psi$ is also smaller than $\epsilon'$. Let $y \in Y$ be given. Using the bound for the Hausdorff distance, it follows that there is a point $x \in X$ with
	\[
	d_{Z}(\varphi(x), \psi(y)) < \epsilon'.
	\]
	Using the bound for the Prokhorov distance, it follows that
	\begin{align*}
		\rho_\epsilon^X &\le \nu_X(C_\epsilon^X(x)) \\
		&\le \nu_X^\varphi( C_\epsilon^Z(\varphi(x))) \\
		&\le \nu_X^\varphi( C_{\epsilon+ \epsilon'}^Z(\psi(y))) \\
		&\le \nu_Y^\psi( C_{\epsilon+ 2\epsilon'}^Z(\psi(y))) + \epsilon'.
	\end{align*} 
	Since $\psi$ is an isometry, it follows that
	\[
	\psi^{-1}(C_{\epsilon+ 2\epsilon'}^Z(\psi(y))) =  C_{\epsilon+ 2\epsilon'}^Y(y).
	\]
	Hence
	\[
	\rho_\epsilon^X \le \nu_Y( C_{\epsilon+ 2\epsilon'}^Y(y)) + \epsilon'.
	\]
	As this holds for all $y \in Y$, it follows that
	\[
	\rho_\epsilon^X - \epsilon' \le \rho_{\epsilon + 2 \epsilon'}^Y.
	\]
\end{proof}

\begin{corollary}
	\label{co:uppercont}
	\begin{enumerate}
		\item Fix $(X, d_X, \nu_X) \in \mathfrak{K}$. The function \[ [0,\infty[ \to \ndR, \epsilon \mapsto \rho_\epsilon^X\] increases monotonically and is right-continuous.
		\item 	Fix $\epsilon>0$. The function \[\mathfrak{K} \to \ndR, (X, d_X, \nu_X) \mapsto \rho_\epsilon^X\] is upper semi-continuous.
	\end{enumerate}
\end{corollary}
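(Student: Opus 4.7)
The plan is to mirror the structure of the proof of Corollary~\ref{co:uppercontlambda}, establishing the two parts in order since the upper semi-continuity in part~(2) will invoke the right-continuity from part~(1).

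For part~(1), monotonicity of $\epsilon \mapsto \rho_\epsilon^X$ is immediate from the trivial inclusion $C_\epsilon^X(x) \subseteq C_{\epsilon'}^X(x)$ for $\epsilon \le \epsilon'$, applied inside the infimum over $x \in X$. For right-continuity at a point $\epsilon \ge 0$, I would argue by contradiction: if the function failed to be right-continuous, monotonicity forces $\lim_{t \downarrow \epsilon} \rho_t^X - \rho_\epsilon^X > 0$, so one could choose $\delta > 0$ together with a point $x \in X$ satisfying $\nu_X(C_\epsilon^X(x)) \le \rho_\epsilon^X + \delta$ while simultaneously $\nu_X(C_{\epsilon + 1/n}^X(x)) \ge \rho_{\epsilon + 1/n}^X \ge \rho_\epsilon^X + 3\delta$ for all large $n$. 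This would yield
\[
\nu_X\bigl(C_{\epsilon + 1/n}^X(x) \setminus C_\epsilon^X(x)\bigr) \ge 2\delta
\]
for all such $n$. However, the nested intersection $C_\epsilon^X(x) = \bigcap_{n \ge 1} C_{\epsilon + 1/n}^X(x)$, combined with continuity of the finite measure $\nu_X$ along decreasing sequences, forces the left-hand side to tend to zero, a contradiction.

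For part~(2), I would take any sequence $(X_n, d_{X_n}, \nu_{X_n})_{n \ge 1}$ converging in $d_{\mathrm{GHP}}$ to $(X, d_X, \nu_X)$ and pick $\epsilon_n' \downarrow 0$ with $d_{\mathrm{GHP}}((X_n, d_{X_n}, \nu_{X_n}), (X, d_X, \nu_X)) < \epsilon_n'$. Applying Lemma~\ref{le:rho} with $X_n$ and $X$ playing the roles of $X$ and $Y$ respectively yields
\[
\rho_\epsilon^{X_n} \le \rho_{\epsilon + 2\epsilon_n'}^X + \epsilon_n'
\]
for every $n$. Taking the limit superior and invoking the right-continuity of $\rho_{(\cdot)}^X$ at $\epsilon$ obtained in part~(1), the right-hand side converges to $\rho_\epsilon^X$, whence $\limsup_{n \to \infty} \rho_\epsilon^{X_n} \le \rho_\epsilon^X$ and upper semi-continuity follows.

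The one point that requires care is checking that the asymmetry in Lemma~\ref{le:rho} points in the correct direction for an upper semi-continuity conclusion: the lemma bounds $\rho_\epsilon^X$ by $\rho_{\epsilon + 2\epsilon'}^Y + \epsilon'$ with the enlarged radius on the side of $Y$, so in order to control $\rho_\epsilon^{X_n}$ by a quantity involving $\rho^X$ one must place $X_n$ in the first slot, which in turn evaluates $\rho^X$ at a slightly inflated radius. This is precisely why right-continuity in the radius is the necessary prerequisite, and once it is secured the argument proceeds without further analytic difficulty.
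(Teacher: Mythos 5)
Your proposal is correct and follows essentially the same route as the paper: monotonicity is immediate, right-continuity is proved by contradiction using that $\nu_X\bigl(C_{\epsilon+1/n}^X(x)\setminus C_\epsilon^X(x)\bigr)\to 0$, and upper semi-continuity follows by applying Lemma~\ref{le:rho} with $X_n$ in the first slot and invoking right-continuity in the radius. Your write-up is in fact slightly more explicit than the paper's at two points the paper leaves implicit, namely the choice of the near-minimizing point $x$ and the identification $C_\epsilon^X(x)=\bigcap_{n\ge 1}C_{\epsilon+1/n}^X(x)$ justifying the measure-continuity step.
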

Compare with~\cite[Lem. 3.2]{zbMATH06610054}.

\begin{proof}
	As for the first claim, it is clear that $\rho_{(\cdot)}^X$  increases monotonically. Suppose that there is a point $\epsilon\ge 0$ where it is not right-continuous. As it is monotonically increasing it has one-sided limits at each point,  entailing that $\lim_{t \downarrow \epsilon} \rho_t^X - \rho_\epsilon^X >0$. It follows that there is a point $x \in X$ and a $\delta>0$ such that \[
	\nu_X(C_{\epsilon + 1/n} (x)) \ge \delta + \nu_X(C_\epsilon(x))\] for all $n \ge 1$. Thus
	\[
	\nu_X(C_{\epsilon + 1/n} (x) \setminus C_\epsilon(x)) \ge \delta
	\]
	for all $n$. But the left-hand side tends to zero as $n$ becomes large. Hence we have proven by contradiction, that $\rho_{(\cdot)}^X$ is right-continuous on $[0,\infty[$. This completes the proof of the first claim.
	
	As for the second claim, suppose that  $(X,d_X, \nu_X) \in \mathfrak{K}$ is the Gromov--Hausdorff--Prokhorov limit of a sequence $(X_n, d_{X_n}, \nu_{X_n})_{n \ge 1}$ in $\mathfrak{K}$. Lemma~\ref{le:rho} implies that for some sequence $t_n>0$ with $t_n = o(1)$
	\[
	\rho_{\epsilon}^{X_n} \le \rho_{\epsilon + 2 t_n}^{X} + t_n.
	\]
	As $\rho_{(\cdot)}^{X}$ is right-continuous, it follows that 
	\[
	\rho_{\epsilon}^{X_n} \le\rho_{\epsilon}^X + o(1).
	\]
	Hence $\rho_{\epsilon}^{(\cdot)}$ is upper semi-continuous.
\end{proof}

We say a Borel probability measure has \emph{full support}, if any open non-empty set has positive measure. Lemma~\ref{le:rho} and the fact that we only consider compact spaces readily yield the following characterization for GHP-limits.

\begin{corollary}
	\label{co:full}
	Let $(X,d_X, \nu_X)$ be the Gromov--Hausdorff--Prokhorov limit of a sequence $(X_n, d_{X_n}, \nu_{X_n})_{n \ge 1}$.  Then the following statements are equivalent:
	\begin{enumerate}
		\item $\nu_X$ has full support.
		\item $\rho_\epsilon^X>0$ for all $\epsilon>0$.
		\item $\liminf_{n \to \infty} \rho_\epsilon^{X_n} >0$ for all $\epsilon>0$. 
	\end{enumerate}
\end{corollary}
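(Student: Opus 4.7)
The plan is to separate the equivalences into two pieces: (1)$\Leftrightarrow$(2), which is a statement about the single space $X$ and uses only compactness, and (2)$\Leftrightarrow$(3), which uses Lemma~\ref{le:rho} together with the right-continuity established in Corollary~\ref{co:uppercont}. Because the limit $X$ is compact, we can freely pass to convergent subsequences whenever we have a sequence of witnesses $x_n \in X$.

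For (1)$\Rightarrow$(2), I would argue by contradiction: if $\rho_\epsilon^X=0$, then there exist $x_n \in X$ with $\nu_X(C_\epsilon^X(x_n))\to 0$. By compactness, along a subsequence $x_n \to x$ for some $x \in X$. For $n$ large enough that $d_X(x_n,x)<\epsilon/2$, we have $C_{\epsilon/2}^X(x) \subset C_\epsilon^X(x_n)$, hence $\nu_X(C_{\epsilon/2}^X(x))=0$. But the open ball $B_{\epsilon/2}^X(x)$ is nonempty (it contains $x$) and contained in $C_{\epsilon/2}^X(x)$, so full support forces $\nu_X(C_{\epsilon/2}^X(x))>0$, a contradiction. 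For (2)$\Rightarrow$(1), given a nonempty open set $U$, pick $x \in U$ and $\epsilon>0$ with $C_{\epsilon/2}^X(x) \subset B_\epsilon^X(x) \subset U$; then $\nu_X(U) \ge \rho_{\epsilon/2}^X>0$.

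For (2)$\Leftrightarrow$(3), I would apply Lemma~\ref{le:rho} in both directions with a sequence $t_n = d_{\mathrm{GHP}}((X_n,d_{X_n},\nu_{X_n}),(X,d_X,\nu_X)) = o(1)$, yielding
\[
\rho_{\epsilon}^{X_n} \le \rho_{\epsilon + 2t_n}^{X} + t_n \quad \text{and} \quad \rho_{\epsilon}^{X} \le \rho_{\epsilon + 2t_n}^{X_n} + t_n.
\]
For (3)$\Rightarrow$(2): taking $\liminf_n$ in the first inequality and using right-continuity of $\rho_{(\cdot)}^X$ at $\epsilon$ gives $\liminf_n \rho_\epsilon^{X_n} \le \rho_\epsilon^X$, so if the left side is positive so is the right. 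For (2)$\Rightarrow$(3): fix $\epsilon>0$ and pick any $\epsilon_0 \in (0,\epsilon)$. Eventually $\epsilon_0 + 2t_n < \epsilon$, so monotonicity of $\rho^{X_n}_{(\cdot)}$ combined with the second inequality (applied at $\epsilon_0$) gives $\rho_{\epsilon_0}^X \le \rho_\epsilon^{X_n} + t_n$, and taking $\liminf_n$ yields $\liminf_n \rho_\epsilon^{X_n} \ge \rho_{\epsilon_0}^X > 0$ by assumption.

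The only subtle point, and the one place where care is needed, is the asymmetry between left and right continuity: since $\rho_{(\cdot)}^X$ is only right-continuous, the direction (2)$\Rightarrow$(3) cannot simply pass to the limit in $\rho_{\epsilon-2t_n}^X$; one must retreat to a fixed $\epsilon_0<\epsilon$ and use monotonicity of $\rho_{(\cdot)}^{X_n}$ to absorb the perturbation $+2t_n$ upward into $\rho_\epsilon^{X_n}$. The remaining steps are routine consequences of Lemma~\ref{le:rho}, Corollary~\ref{co:uppercont}, and compactness of $X$.
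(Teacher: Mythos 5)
Your proof is correct and follows essentially the same route as the paper: compactness for (1)$\Leftrightarrow$(2) and Lemma~\ref{le:rho} for (2)$\Leftrightarrow$(3). The only cosmetic difference is that the paper handles the radius perturbation via the chain $\rho_\epsilon^{X_n} \le \rho_{2\epsilon}^X + o(1) \le \rho_{3\epsilon}^{X_n} + o(1)$ together with the universal quantifier over $\epsilon$, thereby never needing right-continuity, whereas you invoke right-continuity for (3)$\Rightarrow$(2) and a fixed $\epsilon_0 < \epsilon$ for (2)$\Rightarrow$(3); both are valid.
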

Compare with ~\cite[Lem. 15]{MR2571957}.
\begin{proof}
	The equivalence of the first two statements follows from the compactness of $X$. The equivalence of the second and third statement follow from the inequality
	\[
	\rho_\epsilon^{X_n} \le \rho_{2 \epsilon}^X + o(1) \le \rho_{3 \epsilon}^{X_n} + o(1)
	\]
	which holds for each $\epsilon>0$ by Lemma~\ref{le:rho}.
\end{proof}

We require the following version for random elements of~$\mathfrak{K}$.

\begin{corollary}
	\label{co:concon}
	Let $(\mX,d_{\mX}, \nu_{\mX})$ and $(\mX_n, d_{\mX_n}, \nu_{\mX_n})_{n \ge 1}$ be random elements of $\mathfrak{K}$ satisfying
	\[
	(\mX_n, d_{\mX_n}, \nu_{\mX_n}) \convdis (\mX,d_{\mX}, \nu_{\mX}).
	\]
	Then the following statements are equivalent:
	\begin{enumerate}
		\item $\nu_\mX$ has almost surely full support.
		\item For all $\epsilon, \epsilon'>0$ there are $\delta, N>0$ such that for all $n \ge N$: $\Pr{\rho_\epsilon^{\mX_n} < \delta} < \epsilon'$.
	\end{enumerate}
\end{corollary}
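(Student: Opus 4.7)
The plan is to mimic the proof of Corollary~\ref{co:conconlambda}, now using the upper semi-continuity of $\rho_\epsilon^{(\cdot)}$ from Corollary~\ref{co:uppercont} and Corollary~\ref{co:full} in place of Corollary~\ref{co:fulllambda}.

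For the direction Claim 2 $\Rightarrow$ Claim 1, I would argue by contraposition. Assume that with positive probability $\nu_{\mX}$ fails to have full support. By Corollary~\ref{co:full}, this event coincides with $\{\exists \epsilon > 0 : \rho_\epsilon^{\mX}=0\}$, which by monotonicity of $\rho_\epsilon^{\mX}$ in $\epsilon$ equals the increasing union $\bigcup_{k \ge 1}\{\rho_{1/k}^{\mX}=0\}$. Hence there exists $\epsilon > 0$ with $\eta := \Prb{\rho_\epsilon^{\mX}=0} > 0$, and therefore $\Prb{\rho_\epsilon^{\mX}<\delta} \ge \eta$ for every $\delta > 0$. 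Corollary~\ref{co:uppercont} ensures that the set $\{Z \in \mathfrak{K} \mid \rho_\epsilon^{Z} < \delta\}$ is open, so the Portmanteau theorem yields $\liminf_{n \to \infty} \Prb{\rho_\epsilon^{\mX_n} < \delta} \ge \eta$ for every $\delta > 0$. This contradicts Claim 2 applied with this $\epsilon$ and $\epsilon' := \eta/2$.

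For the converse Claim 1 $\Rightarrow$ Claim 2, by Skorokhod's representation theorem I may assume that $\mX_n \to \mX$ almost surely in the Gromov--Hausdorff--Prokhorov metric. Corollary~\ref{co:full} then implies that almost surely $\liminf_{n \to \infty} \rho_\epsilon^{\mX_n} > 0$ for every $\epsilon > 0$. Fix $\epsilon, \epsilon' > 0$. Since the events $\{\liminf_n \rho_\epsilon^{\mX_n} > 1/k\}$ increase in $k$ to an event of full probability, I pick $k$ and set $\delta := 1/k$ so that
\[
\Prb{\liminf_{n \to \infty} \rho_\epsilon^{\mX_n} > \delta} > 1 - \epsilon'.
\]
On this event $\rho_\epsilon^{\mX_n} > \delta$ holds for all sufficiently large $n$, so $\limsup_n \{\rho_\epsilon^{\mX_n} < \delta\}$ is contained in its complement and therefore has probability less than $\epsilon'$. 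The reverse Fatou inequality for sets then yields
\[
\limsup_{n \to \infty} \Prb{\rho_\epsilon^{\mX_n} < \delta} \le \Prb{\limsup_{n \to \infty} \{\rho_\epsilon^{\mX_n} < \delta\}} < \epsilon',
\]
which supplies the $N$ required by Claim 2.

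The main obstacle is the quantifier interchange in the second direction: the almost-sure existence of an $\omega$-dependent threshold must be upgraded to a single deterministic $\delta$ that works for large $n$ with high probability. This is handled via monotonicity of $\rho_\epsilon^{\mX}$ in $\epsilon$ combined with continuity of probability along increasing unions, after invoking Skorokhod's representation theorem.
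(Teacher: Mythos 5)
Your proof is correct and follows essentially the same route as the paper: Portmanteau applied to the open set $\{\rho_\epsilon^Z < \delta\}$ (via upper semi-continuity from Corollary~\ref{co:uppercont}) for the contrapositive direction, and Skorokhod plus Corollary~\ref{co:full} with a continuity-along-increasing-events argument for the other. The only cosmetic differences are that you are a bit more explicit about extracting the fixed $\epsilon$ in the first direction, and you close the second direction with reverse Fatou where the paper writes the limiting event as an increasing union over $N$ --- both yield the same deterministic $N$.
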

\begin{proof}
	By Corollary~\ref{co:uppercont} it holds for all $\epsilon>0$ that  the map $\rho_{\epsilon}^{(\cdot)}$  is upper semi-continuous with respect to the Gromov--Hausdorff--Prokhorov metric. Thus, for any $\delta>0$, the subset $ \{ (Z, d_Z, \nu_Z) \in \mathfrak{K} \mid \rho_\epsilon^Z < \delta\}$ is open. It follows by the Portmanteau theorem that
	\begin{align}
		\label{eq:doh}
		\liminf_{n \to \infty} \Pr{ \rho_\epsilon^{\mX_n} < \delta} \ge \Pr{ \rho_\epsilon^{\mX} < \delta} .
	\end{align}
	Now suppose that $\nu_{\mX}$ is not fully supported. Then there is an $\epsilon'>0$ with $\Pr{\rho_\epsilon^\mX < \delta} \ge 2\epsilon'$ for all $\delta>0$. It follows by \eqref{eq:doh}  $\Pr{ \rho_\epsilon^{\mX_n} < \delta} \ge \epsilon'$ for all sufficiently large $n$. As this holds for each $\delta>0$, we have shown by contraposition that Claim 2 implies Claim 1. 
	
	It remains to show that Claim 1 implies Claim 2. That is, we assume now that $\nu_{\mX}$ is fully supported. By Skorokhod's representation theorem we may assume that $\mX_n$ converges almost surely towards $\mX$. It follows from Corollary~\ref{co:full}  that  \[
	\liminf_{n \to \infty} \rho_\epsilon^{\mX_n} >0
	\] holds almost surely for each $\epsilon>0$. It follows that for any  $\epsilon'>0$ we may select $\delta>0$ such that
	\[
	\Pr{ \text{there is } N \ge 1 \text{ such that for all } n \ge N:  \rho_\epsilon^{\mX_n} > \delta } > 1 - \epsilon'.
	\]
	Writing this event as a union of increasing events indexed by $N$, it follows that there exists a (deterministic) $N \ge 1$ such that for all $n \ge N$
	\[
	\Pr{\rho_\epsilon^{\mX_n} > \delta} > 1 - \epsilon'.
	\]
\end{proof}

\section{Approximation by random finite subsets}

If the measure $\nu_{\mX}$ of a random measured metric space $(\mX, d_{\mX}, \nu_{\mX})$ has almost surely full support, then that space may be approximated by finite collections points sampled independently according to $\nu_{\mX}$.  
We stress that we generate these subsets in a \emph{quenched} way, that is they are independent copies with the respect to the same instance of the random measure $\nu_{\mX}$.

\begin{lemma}
	\label{le:approx}
	Let $(\mX, d_{\mX}, \nu_{\mX})$ be a random element of $\mathfrak{K}$ such that $\nu_{\mX}$ almost surely has full support. Let $\mY_n \subset \mX$ be formed by collecting the first $n \ge 1$ samples in an infinite sequence of independent $\mu_\mX$-distributed points. We let $\nu_{\mY_n}$ denote the uniform measure on $\mY_n$. Then 
	\begin{align}
		\label{eq:approx}
		\max(d_{\mathrm{H}}(\mY_n, \mX), d_{\mathrm{P}}(\nu_{\mY_n}, \nu_{\mX})) \convas 0.
	\end{align}
\end{lemma}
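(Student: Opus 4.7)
The plan is to work conditionally on the random measured space $(\mX, d_{\mX}, \nu_\mX)$, so that the sequence $(\mY_n)_{n \ge 1}$ becomes an increasing sequence of i.i.d.\ $\nu_\mX$-distributed samples from a fixed compact metric space endowed with a fully supported Borel probability measure. Since $\mathfrak{K}$ is Polish, a regular conditional distribution exists and Fubini lifts any conditional almost-sure assertion to an unconditional one. It therefore suffices to establish the two convergences in~\eqref{eq:approx} separately, treating $(\mX, d_\mX, \nu_\mX)$ as deterministic. Moreover, since $\mY_n \subset \mX$, one side of the Hausdorff distance is trivial, and we only need to show that $\mY_n$ is eventually $\epsilon$-dense in $\mX$ for every $\epsilon>0$.

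For the Hausdorff convergence, fix $\epsilon>0$. By compactness, choose a finite cover $\mX = \bigcup_{i=1}^{k} B_{\epsilon/2}^\mX(x_i)$. Full support guarantees $p_i := \nu_\mX(B_{\epsilon/2}^\mX(x_i)) > 0$ for every $i$. Applying the Borel--Cantelli lemma to the independent Bernoulli events ``the $m$-th sample falls in $B_{\epsilon/2}^\mX(x_i)$'', each of which has constant positive probability $p_i$, shows that almost surely every ball in the cover eventually contains a sample. By the triangle inequality this forces $d_{\mathrm{H}}(\mY_n, \mX) \le \epsilon$ for all $n$ sufficiently large. Taking a countable intersection of the resulting almost-sure events along $\epsilon = 1/\ell$, $\ell \ge 1$, yields $d_{\mathrm{H}}(\mY_n, \mX) \to 0$ almost surely.

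For the Prokhorov convergence, I would invoke the classical Varadarajan theorem: the empirical measure of an i.i.d.\ sequence drawn from a Borel probability measure on a separable metric space converges weakly to that measure almost surely. Since a compact metric space is separable, this applies to $(\mX, d_\mX)$ and gives $\nu_{\mY_n} \to \nu_\mX$ weakly, almost surely. Because weak convergence of Borel probability measures on a separable metric space is metrized by the Prokhorov distance, we obtain $d_{\mathrm{P}}(\nu_{\mY_n}, \nu_\mX) \to 0$ almost surely. Note that full support is not needed for this part.

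The only real obstacle is bookkeeping: one must set up a single probability space carrying both $(\mX, d_\mX, \nu_\mX)$ and, conditionally on it, an i.i.d.\ $\nu_\mX$-sequence whose enumeration is measurable in the joint variable. Once this regular conditional construction is in place, the two arguments above apply for $\mX$-almost every realization, and Fubini converts the conditional almost-sure statements into the desired unconditional convergence~\eqref{eq:approx}.
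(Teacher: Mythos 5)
Your proposal is correct and follows essentially the same route as the paper: reduce to a fixed (deterministic) compact space by conditioning, obtain eventual $\epsilon$-density of the sample set from a finite cover by small balls together with full support (the paper phrases the hitting argument directly rather than via Borel--Cantelli, but it is the same observation), and deduce the Prokhorov convergence from Varadarajan's theorem on almost-sure weak convergence of empirical measures combined with separability of the compact space. No substantive differences to report.
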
	
\begin{proof}
	We first treat the deterministic case. Let $(X, d_X, \nu_X)$ be a deterministic compact metric space with a Borel probability measure that has full support. For each $n \ge 1$ let $Y_n \subset X$ be formed by taking $n$ samples in an infinite sequence of independent samples of $\nu_X$. As $X$ is compact, it has for each $\epsilon>0$ a finite cover by $\epsilon$-balls $C_1, \ldots, C_k$ for some $k \ge 1$. As $\nu_X$ has full support, it holds that \[ 
	\delta := \min_{1 \le i \le k} \nu_X(C_i) >0.
	\]
	Hence, with probability one there exists an index $N$ such that for all $n \ge N$
	\[
	Y_n \cap C_i \ne \emptyset \quad \text{for all $1 \le i \le k$}.
	\]
	In particular, the Hausdorff distance $d_{\mathrm{H}}$ satisfies for $n \ge N$
	\[
	d_{\mathrm{H}}(Y_n, X) < \epsilon.
	\]
	Thus,
	\[
	d_{\mathrm{H}}(Y_n, X) \convas 0.
	\]
	
	Moreover, the push-forward $\nu_n$ of $\nu_{Y_n}$ along the inclusion $Y_n \subset X$ is the empirical measure corresponding to $\nu_X$. Hence, by a classical result of~\cite[Thm. 3]{MR94839}, it holds with probability $1$ that
	\[
	\nu_n \Longrightarrow \nu_X.
	\]
	Since $X$ is compact and hence separable, it follows that
	\[
	d_{\mathrm{P}}(\nu_n, \nu_X) \convas 0
	\]
	for the Prokhorov distance $d_{\mathrm{P}}$. 
	
	This verifies that when $(X, d_X, \nu_X)$ is a deterministic compact measured metric space with $\nu_X$ having full support, then
	\begin{align}
		\label{eq:approxdetermin}
		\max(d_{\mathrm{H}}(Y_n, X), d_{\mathrm{P}}(\nu_{Y_n}, \nu_X)) \convas 0.
	\end{align}
	
	We assumed that $(\mX, d_{\mX}, \nu_{\mX})$ is a random compact measured metric space such that $\nu_{\mX}$ has almost surely full support. Hence Equation~\eqref{eq:approx} follows from~\eqref{eq:approxdetermin}.
\end{proof}


\begin{thebibliography}{1}
	
	\bibitem{zbMATH06247183}
	R.~{Abraham}, J.-F. {Delmas}, and P.~{Hoscheit}.
	\newblock {A note on the Gromov-Hausdorff-Prokhorov distance between (locally)
		compact metric measure spaces}.
	\newblock {\em {Electron. J. Probab.}}, 18:21, 2013.
	\newblock Id/No 14.
	
	\bibitem{zbMATH06610054}
	S.~{Athreya}, W.~{L\"ohr}, and A.~{Winter}.
	\newblock {The gap between Gromov-Vague and Gromov-Hausdorff-vague topology}.
	\newblock {\em {Stochastic Processes Appl.}}, 126(9):2527--2553, 2016.
	
	\bibitem{MR1835418}
	D.~Burago, Y.~Burago, and S.~Ivanov.
	\newblock {\em A course in metric geometry}, volume~33 of {\em Graduate Studies
		in Mathematics}.
	\newblock American Mathematical Society, Providence, RI, 2001.
	
	\bibitem{janson2020gromovprohorov}
	S.~Janson.
	\newblock On the {G}romov-{P}rohorov distance.
	\newblock {\em arXiv:2005.13505}, 2020.
	
	\bibitem{MR2571957}
	G.~Miermont.
	\newblock Tessellations of random maps of arbitrary genus.
	\newblock {\em Ann. Sci. \'Ec. Norm. Sup\'er. (4)}, 42(5):725--781, 2009.
	
	\bibitem{MR94839}
	V.~S. Varadarajan.
	\newblock On the convergence of sample probability distributions.
	\newblock {\em Sankhy\={a}}, 19:23--26, 1958.
	
	\bibitem{zbMATH05306371}
	C.~{Villani}.
	\newblock {\em {Optimal transport. Old and new}}, volume 338.
	\newblock Berlin: Springer, 2009.
	
\end{thebibliography}
\end{document}